\newcommand{\ds}{\displaystyle}
\newtheorem{defn}{Definition}[section]
\newtheorem{defns}{Definitions}[section]
\newtheorem{theo}{Theoreme}[section]
\newtheorem{lemm}{Lemma}[section]
\newtheorem{coro}{Corollary}[section]
\newtheorem{prop}{Proposition}[section]
\newtheorem{remq}{Remarque}[section]
\newenvironment{proof}{\textbf{Proof:}}{\hfill$\square$}
\newcommand{\Ess}{\emph{Ess}}
\newcommand{\Es}{\emph{Es}}
\newcommand{\Ind}{\mathbb I}
\begin{document}
\thispagestyle{empty}


\title{On nilpotency in Leibniz algebras}

\author{C\^ome J. A. B\'ER\'E\footnote{: bere\_jean0@yahoo.fr}, M. Fran\c coise OUEDRAOGO\footnote{: omfrancoise@yahoo.fr}\\
 and Moussa OUATTARA\footnote{: ouatt\_ken@yahoo.fr}\\%
Laboratoire T.      N.      AGATA 
       /UFR-SEA\\
        Department of Mathematics /  
University of Ouagadougou%
\\
      03 B.      P.      7021 Ouagadougou, Burkina Faso 03\\
}
%
%
\maketitle
\begin{abstract}
The main result  of this paper is to prove that if a (right) Leibniz algebra  $L$ is \textit{right nilpotent} of degree 
$n$, then $L$ is \textit{strongly nilpotent} of degree less or equal to $4n^2-2n+1$.\par 
\centerline{\textbf{R\'esum\'e}} \par 
Nous prouvons    que toute alg\`ebre de Leibniz (droite) $L$ \textit{ nilpotente \`a droite} d'indice $n$ est  \textit{fortement nilpotente} d'un indice inf\'erieur ou \'egal \`a \mbox{$4n^2-2n+1$}.      
\end{abstract}

\textbf{Keywords.} \textit{ Leibniz algebra,  right nilpotency,   left nilpotency, nilpotency,  strong nilpotency, index.}      \par
\textbf{2010 Mathematics Subject Classification:}  17A32, 17B30. 

\section{Introduction}
In \cite{berepilouat} it  is proved that a Malcev algebra is strongly nilpotent if and only if it is right nilpotent. So for Malcev algebras right nilpotency, left nilpotency and strong nilpotency are equivalent to nilpotency. Since Malcev algebra is anti-commutative, right nilpotency and left nilpotency  are equivalent.  This result fails for Leibniz algebras, see for example \cite[Exemple 3.3]{berpilkob}, which is  left nilpotent and not right  nilpotent.\par
Using the notion of $\Es_k$-right nil (or $\Es_k$-left nil) we prove that if an ideal $B$ of a (right) Leibniz algebra  $L$ is 
\textit{right nilpotent} of degree $n$, then $B$ is \textit{strongly nilpotent} of degree less or equal to $4n^2-2n+1$.       

In section \ref{prelim}, we give  some definitions that we will used along the paper, then in the  section \ref{motdroit}, 
 we prove some results on right products of length $n$.
 The section \ref{poids} is devoted to right products of weight $n$ in the ideal $B$ and in the section \ref{mainresult}, we give the main results.

\section{Preliminairies}\label{prelim}
Throughout this paper, $F$ will be a field of characteristic not $2$. All vector spaces and algebras will be finite
dimensional over $F$. Let $n$ be a nonnegative integer, and let us denote  the set  $\left\{ 1,2,\cdots,n\right\}$ by   $\mathbb{I}(n)$.
\addtocounter{defns}{1}
\begin{defn} {(Leibniz algebra)} \cite{Loday,berpilkob}\par 
A Leibniz algebra is a vector space $L$ equipped with a bilinear
map $[-,-]:L\times L\longrightarrow L$, satisfying the Leibniz identity:

\begin{equation}
[x,[y,z]]=[[x,y],z]-[[x,z],y]\textnormal{ for any }x,y,z\in L.\label{droit}
\end{equation}
\end{defn}

If the condition $[x,x]=0$ is fulfilled, the Leibniz identity is
equivalent to the so-called Jacobi identity. Therefore Lie algebras
are particular cases of Leibniz algebras. Algebras which satisfy (\ref{droit}) are also call right Leibniz algebras 
and left Leibniz algebras are defined as followed:

\addtocounter{defns}{1}
\begin{defn} {(left Leibniz algebra)} \cite{berpilkob}\par 
A left Leibniz algebra is a vector space $L$ equipped with a bilinear
map $[-,-]:L\times L\longrightarrow L$, satisfying the left Leibniz identity:

\begin{equation}
[x,[y,z]]=[[x,y],z]+[y,[x,z]]\textnormal{ for any }x,y,z\in L.\label{droit2}
\end{equation}
\end{defn}

It follows from the Leibniz identity (\ref{droit}) that in any Leibniz algebra one
has 
\[
[y,[x,x]]=0,\,[z,[x,y]]+[z,[y,x]]=0,\textnormal{ for all }x,y,z\in L.
\]
\addtocounter{defns}{1}
\begin{defn} 
A subspace $H$ of a Leibniz algebra $L$ is called left (respectively
right) ideal if for $a\in H$ and $x\in L$ one has $[x,a]\in H$
(respectively $[a,x]\in H$). If $H$ is both left and right ideal,
then $H$ is called (two-sided) ideal.
\end{defn}

Let us denote the product $[a,b]$ by $ab$ for all $a,b$ in $L$. $\Ess(L)$ will be the ideal generated by all 
the squares of the  elements of $L$.

For  an ideal  $B$ of a Leibniz algebra  $L$, we introduce the notations and following terminologies:

 Let  $P$ be a product of $m$ factors $s_m, s_{m-1}, \cdots, s_1$, that have been associated in an arbritary way. 
We suppose that $n$ or more factors belong to $B$. We say that the product $P$ is of length $m$ and of weight $n$ with respect 
to the ideal $B$ or more simply that $P$ is of length $m$ and of weight $n$ in $B$. 
The length $m$ of $P$ will be noted $\#\left(P\right)$ and its weight $n$ 
 will be noted $\#_{B}\left(P\right)$.

When $P=\left(\left(\cdots\left(\left(s_ms_{m-1}\right)s_{m-2}\cdots\right)s_3\right)%
s_2\right)s_1$   where the association is made
always right, we say that $P$ is a  \textit{right product} and we write 
$P=s_ms_{m-1}s_{m-2}\cdots s_1$. Similarly, if 
$P=s_1\left(s_2\left(s_3\left(\cdots s_{m-2}\left(s_{m-1}s_m\right)\right)\cdots\right)\right)$    where the association is made
always left, we say that $P$ is a  \textit{left product}.\\
Let $S_1,S_2,\cdots,S_p$ be right products. One can  write the right product (with $S_j$ as factors)  $N=S_pS_{p-1}\cdots S_1$.
We call $N$ a standard product.\\ 
\addtocounter{defn}{1}
\begin{defns}
\begin{itemize}
\item  A subspace $B$ of the underlying vector space  $L$ is \textit{right nilpotent}  if $B^n=\{0\}$ for some $n\geq1$,
 where $B^1=B$ and $B^{n+1}=B^n\cdot B$. By convention, we set $B^0 = L$. Notice that $B^n$ is generated by 
\textit{right products} of length $n$ and weight $n$ in $B$.  
\item  A subspace $B$ of the underlying vector space  $L$ is %
\textit{left nilpotent}  if\, $^n\!B=0$ for some $n\geq1$,  where $^1B=B$ and $^{1+n}\!B=B\cdot(^n\!B)$. By convention, 
we set $^0B= L$. Notice that $^n\!B$ is generated by \textit{left products} of length $n$ and weight $n$ in $B$. 
\end{itemize}
\end{defns} 
\addtocounter{defn}{1}
\begin{defns}
\begin{itemize}
\item  Let $B^{\left\{n\right\}}$ be the subspace of the underlying vector space  $L$ generated by all the products of length $n$ in $B$, associated in arbritary way.  We say that the ideal $B$ is \textit{nilpotent}  if there exists an integer $n$ such that $B^{\left\{n\right\}}=\left\{0\right\}$.      
\item Let $B^{\left\langle n\right\rangle }$ be the subspace generated by all products of elements in  $L$ with at least $n$ elements in $B$. 
A subspace $B$  is \textit{strongly nilpotent}  if $B^{\left\langle n\right\rangle }=\left\{ 0\right\}$ for some $n\geq1$.
\end{itemize}
\end{defns} 

Naturally,
$B^{\left\langle n\right\rangle }$ is an ideal of  $L$ and one has\\   $B\supseteq B^{\left\langle 1\right\rangle }\supseteq B^{\left\langle 2\right\rangle }\supseteq\cdots\supseteq B^{\left\langle n\right\rangle }\supseteq\cdots$
and $B^{\left\langle i\right\rangle }B^{\left\langle j\right\rangle }\subseteq B^{\left\langle i+j\right\rangle }$
for all nonnegative integers $i,j\geq1$.\par


\addtocounter{defn}{1}
\begin{defns}
\begin{itemize} 
\item Let $D$ be a subspace of the underlying vector space  $L$. Let $k$ be a nonnegative integer. $D_{(L,k)}$ is the vector subspace
generated by all  right products: "$da_{k}\cdots a_3a_2a_1$" where $d$ belongs to $D$ and $a_i$ belongs to  $L$ for any integer $i\in\Ind(k)$. 
\item  Let $D$ be a subspace of the underlying vector space  $L$. Let $k$ be a a nonnegative integer. $_{(L,k)}D$ is the vector subspace
generated by all  left products: "$a_1(a_2(a_3(\cdots(a_{k}d)))\cdots) $" where $d$ belongs to $D$ and $a_i$ belongs to  $L$ for any integer  $i\in\Ind(k)$.
\end{itemize}
\end{defns}     

\addtocounter{defn}{1}
\begin{defns} 
\begin{itemize}
 \item Let $B\neq\{0\}$ be an ideal of the Leibniz algebra $L$.     
If there is an integer $k\geq1$, such that the ideal $\Es(B)=B\cap\Ess(L)$ satisfies  
$\Es(B)_{(L,k)}=\Es(B)\underset{k\textnormal{ times}}{\underbrace{AAA\cdots A}}=\left\{ 0\right\} $, 
we will say that $B$ is \textit{$\Es_k$-right nil}. 
 \item Let $B\neq\{0\}$ be an ideal of the Leibniz algebra $L$.     
If there is an integer $k\geq1$, such that the ideal $\Es(B)=B\cap\Ess(L)$ satisfies  
$_{(L,k)}\Es(B)=\underset{k\textnormal{ times}}{\underbrace{A\cdots AAA}}\Es(B)=\left\{ 0\right\} $, 
we will say that $B$ is \textit{$\Es_k$-left nil}.   
\end{itemize}
\end{defns} 

\addtocounter{defns}{1}
\begin{defn} Let  $L$ be a Leibniz algebra and $B$ an ideal 
in  $L$. Let $a,b$ ly in  $L$.  If $a-b\in B$, we will say that $a\equiv b \textnormal{ (modulo }B)$.%
\end{defn}

\section{Right products in the  Leibniz algebra $L$}\label{motdroit}

\begin{lemm}\label{drf0} Let $L$ be a Leibniz algebra and $B$ an ideal of  $L$. For all $a\in L$ and  for all $b\in B$,  
$ab+ba\in \Es(B)=B\cap\Ess(L)$.
\end{lemm}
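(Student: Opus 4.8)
The plan is to establish the two memberships $ab+ba\in B$ and $ab+ba\in\Ess(L)$ separately, and then simply to intersect them, since $\Es(B)=B\cap\Ess(L)$ by definition.

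For the first membership I would invoke that $B$ is a two-sided ideal. Writing $ab=[a,b]$ with $b\in B$ and $a\in L$, the left ideal property gives $[a,b]\in B$; writing $ba=[b,a]$ with $b\in B$ and $a\in L$, the right ideal property gives $[b,a]\in B$. As $B$ is a subspace, the sum $ab+ba$ then lies in $B$. This step uses only the defining closure properties of an ideal and presents no difficulty.

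For the second membership the key is the polarization identity coming from bilinearity of the bracket. Expanding the square of $a+b$ yields
\[
[a+b,a+b]=[a,a]+[a,b]+[b,a]+[b,b],
\]
so that
\[
ab+ba=[a,b]+[b,a]=[a+b,a+b]-[a,a]-[b,b].
\]
Each of the three terms on the right is the square of an element of $L$, hence lies in the ideal $\Ess(L)$ generated by all such squares; since $\Ess(L)$ is in particular a subspace, the combination $ab+ba$ belongs to $\Ess(L)$ as well.

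Combining the two inclusions gives $ab+ba\in B\cap\Ess(L)=\Es(B)$, which is exactly the assertion. The only genuine ingredient is the polarization identity expressing $ab+ba$ as a linear combination of squares; everything else is bookkeeping with the ideal axioms, so I do not expect any real obstacle in carrying out this argument. (Note that no use of $\mathrm{char}\,F\neq 2$ is required here, since the identity above involves no division.)
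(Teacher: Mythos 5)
Your proof is correct: the polarization identity $ab+ba=[a+b,a+b]-[a,a]-[b,b]$ together with the two-sided ideal property of $B$ is exactly the standard argument, and the paper itself dismisses this lemma with the single word ``Obvious,'' so you have simply supplied the details the authors left implicit. Your remark that characteristic $\neq 2$ is not needed here is also accurate.
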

\begin{proof} Obvious.   
\end{proof}  
  
\begin{lemm}\label{drf1}  For any integer $n\geq 1$, $B^n\subseteq\, ^n\!B+\Es(B)$.
\end{lemm}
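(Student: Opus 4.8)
The plan is to argue by induction on $n$, using Lemma \ref{drf0} to trade a right-hand factor lying in $B$ for a left-hand one, at the cost of an element of $\Es(B)$. Two preliminary observations set things up. First, since $\Ess(L)$ is an ideal of $L$ and $B$ is an ideal, their intersection $\Es(B)=B\cap\Ess(L)$ is again an ideal; in particular $\Es(B)\cdot B\subseteq\Es(B)$. Second, a straightforward induction shows ${}^n\!B\subseteq B$ for every $n\geq1$ (the right-ideal property gives $B\cdot{}^n\!B\subseteq B$), so every element of ${}^n\!B$ is a legitimate argument for Lemma \ref{drf0}.

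The base case $n=1$ is immediate, since $B^1=B={}^1\!B\subseteq{}^1\!B+\Es(B)$. For the inductive step I would assume $B^n\subseteq{}^n\!B+\Es(B)$, then multiply on the right by $B$ and use $B^{n+1}=B^n\cdot B$ to get
\[
B^{n+1}=B^n\cdot B\subseteq\bigl({}^n\!B+\Es(B)\bigr)B=({}^n\!B)B+\Es(B)\cdot B\subseteq({}^n\!B)B+\Es(B),
\]
where the last inclusion uses the first observation above. It therefore remains to prove $({}^n\!B)B\subseteq{}^{n+1}\!B+\Es(B)$.

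This is where Lemma \ref{drf0} does the real work. The subspace $({}^n\!B)B$ is spanned by products $ub$ with $u\in{}^n\!B\subseteq B$ and $b\in B$. Applying Lemma \ref{drf0} with the element $u\in L$ in the role of $a$ gives $ub+bu\in\Es(B)$, so $ub\equiv-bu\pmod{\Es(B)}$. But $bu\in B\cdot({}^n\!B)={}^{n+1}\!B$ by the definition of left nilpotency, whence $ub\in{}^{n+1}\!B+\Es(B)$. Running over a spanning set yields $({}^n\!B)B\subseteq{}^{n+1}\!B+\Es(B)$, and combining this with the previous display closes the induction.

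The main obstacle here is conceptual rather than computational: one must recognize that the right-associated factor $b$ can be pushed to the left modulo $\Es(B)$, turning a right product into a left product. Everything else is bookkeeping, provided one has first verified that $\Es(B)$ is an ideal and that ${}^n\!B\subseteq B$, since these are exactly what permit absorbing each error term into $\Es(B)$ at every stage of the induction.
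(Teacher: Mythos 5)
Your proof is correct, and it takes a cleaner route than the paper's. Both arguments rest on the same two ingredients --- Lemma \ref{drf0}, which trades $ub$ for $-bu$ modulo $\Es(B)$, and the fact that $\Es(B)$ is an ideal, which absorbs the error terms --- but they are deployed in a different order. The paper writes $B^{n+1}=(B^{n-1}\cdot B)\cdot B$, applies Lemma \ref{drf0} twice to move both right-hand $B$-factors to the left, and only then invokes the induction hypothesis at depth $n-1$ inside $B\cdot(B\cdot B^{n-1})$; because its step goes from $n-1$ to $n+1$, the paper needs the stronger induction hypothesis (for all $p\leq n$) and the extra base cases $n=2,3$. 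You instead apply the ordinary induction hypothesis immediately to $B^n$, which reduces everything to the single inclusion $({}^n\!B)B\subseteq{}^{n+1}\!B+\Es(B)$, settled by one application of Lemma \ref{drf0} together with $B\cdot({}^n\!B)={}^{n+1}\!B$; the payoff is a shorter step and only the trivial base case $n=1$. One cosmetic remark: your preliminary observation that ${}^n\!B\subseteq B$ is never actually used, since Lemma \ref{drf0} only asks that its first argument lie in $L$ and its second in $B$; in the product $ub$ with $u\in{}^n\!B$ and $b\in B$, it is the membership $b\in B$ that does the work.
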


\begin{proof} For $n=1$ or $2$, the result is obvious.
Let $n=3$ and 
 $a,b,c$ be elements of $B$, we have $a(bc)+(bc)a\in\Es(B)$ and so $(bc)a$ equals $a(bc)$ modulo $\Es(B)$. So, we can write  $B^3\subseteq\, ^3\!B+\Es(B)$. Let us set by hypothesis that $B^p\subseteq\, ^p\!B+\Es(B)$ for all integer $p\leq n$ and prove that $B^{n+1}\subseteq\, ^{1+n}\!B+\Es(B)$.  We have\\
\begin{align*}
B^{n+1} & =\left(B^{n-1}\cdot B\right)\cdot B
  \subseteq B\cdot\left(B^{n-1}\cdot B\right)+\Es(B)\\
  &\subseteq B\cdot\left[B\cdot B^{n-1}+\Es(B) \right]+\Es(B)
  \subseteq B\cdot(B\cdot B^{n-1})+\Es(B)\\
  &\subseteq B\cdot(B\cdot \left[^{-1+n}B+\Es(B) \right])+\Es(B)\\
  &\subseteq B\cdot(B\cdot^{-1+n}\!\!\!B)+\Es(B)
  \subseteq\, ^{1+n}\!B+\Es(B).
  \end{align*}
 \end{proof}

\begin{lemm}\label{drf}  Let $L$ be a Leibniz algebra and $B$ an ideal of  $L$. Let us define $B_{0}=L$, $B_1=B$ and
$B_{k}=B^{k}+\Es(B)$ for all integer $k\geq2$ ; $B_{k}$
is an ideal of  $L$, which satisfies  $B_{k}\supseteq B_{k+1}$.
\end{lemm}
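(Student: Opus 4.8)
The plan is to prove the two assertions separately, reducing both of them to a single computation showing that right multiplication by an arbitrary element of $L$ cannot push $B^k$ outside of $B^k+\Es(B)$.

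First I would record that $\Es(B)=B\cap\Ess(L)$ is itself an ideal of $L$, being the intersection of the ideal $B$ with the ideal $\Ess(L)$ generated by the squares. In particular $\Es(B)\cdot L\subseteq\Es(B)$, $L\cdot\Es(B)\subseteq\Es(B)$, and $\Es(B)\subseteq B$; I will also use freely that $B^k\subseteq B$ for $k\geq1$, since $B$ is a right ideal. The heart of the argument is the claim that $B^k\cdot L\subseteq B^k+\Es(B)$ for every $k\geq1$, which I would establish by induction on $k$. For $k=1$ this is exactly the statement that $B$ is a right ideal. For the inductive step, write a spanning element of $B^{k+1}=B^k\cdot B$ as $ub$ with $u\in B^k$ and $b\in B$, take any $a\in L$, and apply the right Leibniz identity (\ref{droit}) in the rearranged form $(ub)a=u(ba)+(ua)b$. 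Since $B$ is a right ideal, $ba\in B$, hence $u(ba)\in B^k\cdot B=B^{k+1}$; and by the induction hypothesis $ua\in B^k\cdot L\subseteq B^k+\Es(B)$, so $(ua)b\in B^k\cdot B+\Es(B)\cdot B\subseteq B^{k+1}+\Es(B)$, the last inclusion using that $\Es(B)$ is a right ideal. Adding the two pieces gives $(ub)a\in B^{k+1}+\Es(B)$, which closes the induction.

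With this claim in hand the ideal property follows quickly. For $k\geq2$ the claim gives $B^k\cdot L\subseteq B^k+\Es(B)=B_k$, and since $\Es(B)$ is an ideal, $B_k\cdot L=(B^k+\Es(B))\cdot L\subseteq B^k\cdot L+\Es(B)\cdot L\subseteq B_k$; thus $B_k$ is a right ideal. For the left ideal property I would invoke Lemma~\ref{drf0}: for $P\in B^k\subseteq B$ and $a\in L$ one has $aP+Pa\in\Es(B)$, so $aP\equiv -Pa\pmod{\Es(B)}$, and since $Pa\in B^k+\Es(B)$ by the claim, also $aP\in B^k+\Es(B)=B_k$; using once more that $\Es(B)$ is an ideal, this upgrades to $L\cdot B_k\subseteq B_k$. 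The cases $k=0$ (where $B_0=L$) and $k=1$ (where $B_1=B$) are ideals by hypothesis.

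Finally, for the descending chain, $B_0=L\supseteq B=B_1$ and $B_1=B\supseteq B^2+\Es(B)=B_2$ are immediate from $B^2\subseteq B$ and $\Es(B)\subseteq B$. For $k\geq2$ the claim yields $B^{k+1}=B^k\cdot B\subseteq B^k\cdot L\subseteq B^k+\Es(B)$, whence $B_{k+1}=B^{k+1}+\Es(B)\subseteq B^k+\Es(B)=B_k$. The main obstacle is precisely the inductive claim $B^k\cdot L\subseteq B^k+\Es(B)$: everything else is bookkeeping, whereas this step is where the failure of $B^{k+1}\subseteq B^k$ in the non-commutative Leibniz setting must be absorbed into the correction term $\Es(B)$, and it is the identity $(ub)a=u(ba)+(ua)b$ that makes the induction go through.
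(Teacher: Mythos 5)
Your proof is correct, but it takes a genuinely different (and tighter) route than the paper's. The paper also inducts on $k$, but its inductive step first flips $B^{k+1}\cdot L$ into $L\cdot\left(B^{k}\cdot B\right)$ modulo $\Es(B)$ via Lemma~\ref{drf0}, and then expands with the Leibniz identity; this produces the term $\left(L\cdot B\right)\cdot B^{k}\subseteq B\cdot B^{k}$, a \emph{left} multiplication by $B$, and to absorb it the paper must detour through left powers, writing $B\cdot B^{k}\subseteq B\cdot\left({}^{k}B+\Es(B)\right)\subseteq {}^{1+k}B+\Es(B)$ and then citing Lemma~\ref{drf1} --- although what is actually needed at that point is the inclusion ${}^{1+k}B\subseteq B^{k+1}+\Es(B)$, the mirror image of what Lemma~\ref{drf1} literally states, so that citation is loose. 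Your inductive claim $B^{k}\cdot L\subseteq B^{k}+\Es(B)$, proved by expanding $(ub)a=u(ba)+(ua)b$ directly, keeps every product right-handed: the two resulting terms land in $B^{k+1}$ and $B^{k+1}+\Es(B)$, with no left powers and no appeal to Lemma~\ref{drf1} at all. You then use Lemma~\ref{drf0} exactly where the paper does, to transfer the right-ideal property to the left side, and you also make explicit the chain inclusion $B_{k+1}\subseteq B_{k}$, which the paper leaves implicit. In short, your argument is more self-contained and repairs the loose reference, while the paper's detour through ${}^{k}B$ ties the lemma to the left/right comparison (Lemma~\ref{drf1}) it had already set up.
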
       

\begin{proof} It is  known that $\Es(B),B_{0},B_1$ are ideals. Let us 
assume 
 that for an integer $k\geq2$, $B_{k}$ is an ideal. Then one has  $B_{k}\cdot A\subseteq B^{k}+\Es(B)$ and $A\cdot B_{k} \subseteq B^{k}+\Es(B)$.  Let us show that $B_{k+1}$ is also an ideal.      Indeed; 
\begin{align*}
B_{k+1}\cdot A & =\left(B^{k+1}+\Es(B)\right)\cdot  A
 \subseteq B^{k+1}\cdot A+\Es(B)\\
 & \subseteq\left(B^{k}\cdot     B\right)\cdot A+\Es(B)
 \subseteq A\cdot\left(B^{k}\cdot     B\right)+\Es(B) \textnormal{ (see     Lemma \ref{drf0})}\\
 & \subseteq \left(A\cdot B^{k}\right)\cdot B+\left(A\cdot B\right)\cdot B^{k}+\Es(B)\\
 &\subseteq \left(B^k+\Es(B)\right)\cdot B+ B\cdot B^k+\Es(B)\\
 &\subseteq B^{k+1}+B\cdot B^k+\Es(B)
 \subseteq B^{k+1}+B\cdot\left(^{k}\!\!B+\Es(B)\right)+\Es(B)\\ 
 &\subseteq B\cdot ^{k}\!\!B+B^{k+1}+\Es(B)
 \subseteq ^{1+k}\!\!\!B+B^{k+1}+\Es(B)\\ 
 &\subseteq B^{k+1}+\Es(B) =B_{k+1} \textnormal{ (see Lemma \ref{drf1})}.    
\end{align*}
Thanks to Lemma \ref{drf0}, we have $A\cdot B_{k+1}\subseteq B_{k+1}\cdot A+\Es(B)$. So we obtain  $A\cdot B_{k+1}\subseteq B^{k+1}+\Es(B) =B_{k+1}$.
\end{proof}

\begin{prop} \label{prop:P-som} 
For a given right product $P_{0}=a_{m}a_{m-1}a_{m-2}\cdots a_3a_2a_1$  and 
$Q_{0}$ an arbritary product of length  $m'$,
let us set recursively, for any integer 
$i\in\Ind(m-1)$,
 $P_{i}=a_ma_{m-1}\cdots a_{i+1}=\ds\prod_{j=1}^{m-i}a_{m-j+1}$ and 
  $Q_i=-Q_{i-1}a_i$.
Then: 
$$
T_{m}=Q_{0}P_{0}=\sum_{i=1}^{m-1}Q_{i-1}P_{i}a_{i}+Q_{m-1}a_{m}.$$
\end{prop}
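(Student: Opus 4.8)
The plan is to establish the identity by a single telescoping sum whose summand is produced by exactly one application of the Leibniz identity (\ref{droit}). The structural fact that makes everything work is that each truncated right product $P_{i-1}$ is obtained from the next one $P_i$ by appending the factor $a_i$ on the right; that is, $P_{i-1}=P_ia_i$ for every $i\in\Ind(m-1)$. This is immediate from the definition $P_i=a_ma_{m-1}\cdots a_{i+1}=\ds\prod_{j=1}^{m-i}a_{m-j+1}$ together with the left-normed reading of a right product.

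The key step is to rewrite $Q_{i-1}P_{i-1}$ for each $i$. Writing $P_{i-1}=P_ia_i$ and applying the Leibniz identity (\ref{droit}) with $x=Q_{i-1}$, $y=P_i$, $z=a_i$ gives $Q_{i-1}(P_ia_i)=(Q_{i-1}P_i)a_i-(Q_{i-1}a_i)P_i$. Since by definition $Q_i=-Q_{i-1}a_i$, the last term is exactly $Q_iP_i$, so one obtains the recurrence $Q_{i-1}P_{i-1}=Q_{i-1}P_ia_i+Q_iP_i$, equivalently $Q_{i-1}P_{i-1}-Q_iP_i=Q_{i-1}P_ia_i$.

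With the recurrence in hand I would simply sum it over $i=1,\dots,m-1$. The left-hand sides telescope to $Q_0P_0-Q_{m-1}P_{m-1}$, while the right-hand sides accumulate into $\ds\sum_{i=1}^{m-1}Q_{i-1}P_ia_i$. Rearranging yields $Q_0P_0=\ds\sum_{i=1}^{m-1}Q_{i-1}P_ia_i+Q_{m-1}P_{m-1}$, and since $P_{m-1}=a_m$ is a single factor, $Q_{m-1}P_{m-1}=Q_{m-1}a_m$, which is precisely the asserted formula for $T_m=Q_0P_0$.

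There is no serious obstacle here; once the conventions are fixed, the computation is a bookkeeping exercise. The only points requiring care are, first, keeping track of the bracketing so that $Q_{i-1}P_ia_i$ is read as the left-normed product $(Q_{i-1}P_i)a_i$ consistent with the definition of a right product, and second, the sign handling coming from $Q_i=-Q_{i-1}a_i$, which is exactly what converts the minus sign in (\ref{droit}) into a clean telescoping recurrence. Alternatively the same result follows by induction on the length $m$, peeling off the factor $a_1$ first via $P_0=P_1a_1$ and then applying the inductive hypothesis to $Q_1P_1$; but the telescoping formulation avoids any reindexing.
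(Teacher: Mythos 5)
Your proof is correct and is essentially the paper's proof in different packaging: the key step in both is a single application of the Leibniz identity to $Q_{i-1}\left(P_{i}a_{i}\right)$, with the minus sign absorbed by the definition $Q_{i}=-Q_{i-1}a_{i}$. The paper iterates this step by induction on $m$ (peeling off $a_{1}$ and reindexing with primed symbols $Q'_{i},P'_{i}$), whereas you sum the same recurrence $Q_{i-1}P_{i-1}=\left(Q_{i-1}P_{i}\right)a_{i}+Q_{i}P_{i}$ telescopically over $i$ --- a reorganization, not a different argument, though it does avoid the paper's reindexing bookkeeping and separate base cases.
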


\begin{proof} First of all let us define the following products:
For $i\in\Ind(m-1)$, we set  $Q'_{i-1}=Q_{i}$ and $a'_{m-i+1}=a_{m-i+2}$. It follows that  
$$P'_i=\ds\prod_{j=1}^{m-i}a'_{m-j+1}=\ds\prod_{j=1}^{m-i}a_{m-j+2}=\prod_{j=1}^{m+1-(i+1)}a_{m+1-j+1}=P_{i+1}.$$
Now 
if   $m=2$, then\\
$\begin{aligned} T_2&=Q_{0}\left(a_2a_1\right) =Q_{0}a_2a_1-Q_0a_2a_1=Q_{0}P_1a_1+Q_1a_1,\textnormal{ and if  }    m=3;\\
T_3&=Q_{0}\left(a_3a_2a_1\right)\\ 
& =  Q_{0}\left(a_3a_2\right)a_1-\left(Q_{0}a_1\right)\left(a_3a_2\right)=Q_0P_1a_1+Q_1\left(a_3a_2\right)\\
 & =  Q_0P_1a_1+Q_1a_3a_2-Q_1a_2a_3\\
 & = Q_0P_1a_1+Q_1P_2a_2+Q_2a_3.
\end{aligned}
$\par
Assume 
that  
 \begin{equation}\label{SionPm}
     T_{m}=\sum_{i=1}^{m-1}Q_{i-1}P_{i}a_{i}+Q_{m-1}a_{m}.     
    \end{equation} 

Then for 
$P=a_{m+1}a_{m}a_{m-1}\cdots a_3a_2a_1$, we have:

$\begin{aligned}T_{m+1} & =  Q_{0}\ds\prod_{k=1}^{m+1}a_{m-k+2}
 =Q_{0}\left[\ds\prod_{k=1}^{m}a_{m-k+2}a_1\right]\\ 
 & = Q_{0}\ds\prod_{k=1}^{m}a_{m-k+2}a_1-Q_0a_1\ds\prod_{k=1}^{m}a_{m-k+2}\\
& = Q_{0}\ds\prod_{k=1}^{m}a_{m-k+2}a_1+Q_1\ds\prod_{k=1}^{m}a_{m-k+2} \\
\end{aligned}$\par 

It follows that 

$\begin{aligned}
T_{m+1}  &  =Q_{0}\left(P'_0a_1\right)
= Q_{0}P'_0a_1+Q_1P'_0\\
 & = Q_{0}P_1a_1+Q'_0P'_0 \label{equ3} 
\end{aligned}$\par 

Since the  length of $P'_0$ is $m$ we can  write:

$\begin{aligned}
Q'_0P'_0 & =\sum_{i=1}^{m-1}Q'_{i-1}P'_ia'_{i}+Q'_{m-1}a'_{m}\\
 & =\sum_{i=1}^{m-1}Q_{i}P_{i+1}a_{i+1}+Q_{m}a_{m+1}.
\end{aligned}$\par

Thanks to Equation (\ref{equ3}), we obtain:

$\begin{aligned}
T_{m+1} & = Q_{0}P_1a_1+Q'_0P'_0\\
	& = Q_{0}P_1a_1+\sum_{i=1}^{m-1}Q_{i}P_{i+1}a_{i+1}+Q_{m}a_{m+1}\\
 & = \sum_{i=1}^{m}Q_{i-1}P_{i}a_{i}+Q_{m}a_{m+1}.
\end{aligned}$\\
\hfill\end{proof}

\begin{remq}\label{remq} With the hypothesis of Proposition \ref{prop:P-som}, 
%
note $p,p',p''$ the respective weight of $P,Q_{i-1},P_{i}\;(1\leq i\leq m )$
with regard to the ideal $B$.      Let us consider the following table:

\begin{table*}[h]
\begin{tabular}{||l||l||l||l||l}
\hline 
 $P_{m-1}=a_{m}$  &   & $\cdots$  & $P_{i-1}=P_{i}a_{i}$  & $\cdots$ \tabularnewline
\hline 
 $Q_{m-1}=-Q_{m-2}a_{m-1}$  &   & $\cdots$  & $Q_{i-1}=-Q_{i-2}a_{i-1}$  & $\cdots$ \tabularnewline
\hline 
\end{tabular}
\end{table*}

\begin{table*}[h]%
\begin{tabular}{l||l||l||l||l||l||}
\hline 
$\cdots$  & $P_{i+j}=P_{i+j+1}a_{i+j+1}$  & $\cdots$  &   & $P_1=P_2a_2$  & $P_{0}=P_1a_1$ \tabularnewline
\hline 
$\cdots$  & $Q_{i+j}=-Q_{i+j-1}a_{i+j}$  & $\cdots$  &   & $Q_1=-Q_0a_1$  & $Q_{0}=Q_{0}$ \tabularnewline
\hline 
\end{tabular}\label{alg5} 
\end{table*}

Let 
$\Lambda=\left\{(a_i)_{1\leq i\leq m},(b_j)_{1\leq j\leq m'}\right\}$ be the set of all factors of  $P$. It is easy to check that $\Lambda$ also produces $\left(Q_{k-1}P_k\right)a_k,Q_{m-1}a_{m}$ for $k\in{\mathbb I}(m-1)$. Then, one has for an integer 
 $k$ in $\Ind(m-1)$:
\begin{eqnarray}
\#\left(P\right)\phantom{_B}&=&\#\left(Q_{k-1}\right)+\#\left(P_k\right)+1%
\label{rem1.1}\\
\#_{B}\left(P\right)&=&\#_{B}\left(Q_{k-1}\right)+\#_{B}\left(P_k\right)+
 \#_{B}\left(a_k\right)\label{rem1.2} \\
 \#\left(P\right)\phantom{_B}&=&\#\left(Q_{m-1}\right)+1\label{rem1.3}\\
\#_{B}\left(P\right)&=&\#_{B}\left(Q_{m-1}\right)+
 \#_{B}\left(a_m\right).\label{rem1.4}
\end{eqnarray}         
\end{remq}

\begin{lemm} \label{lem:nilpo1} Any product $T$ with length $m$
in a Leibniz algebra $L$ is a linear combination of right products of length $m$.
\end{lemm}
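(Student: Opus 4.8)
The plan is to argue by strong induction on the length $m$, using Proposition \ref{prop:P-som} as the engine of the reduction. The base case $m=1$ is immediate, since a single factor is itself a right product of length $1$. For the inductive step, I would exploit the fact that any product $T$ of length $m\geq 2$ arises, at its outermost bracketing, as $T=T'T''$ for two products $T'$ and $T''$ of lengths $m'$ and $m''$ with $m'+m''=m$ and $1\leq m',m''<m$. By the induction hypothesis each of $T'$ and $T''$ is a linear combination of right products, so by bilinearity of the bracket $T$ becomes a linear combination of products of the form $R'R''$, where $R'$ is a right product of length $m'$ and $R''$ is a right product of length $m''$. Thus it suffices to prove that such a product of two right products is a linear combination of right products of length $m$.

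To handle $R'R''$, I would apply Proposition \ref{prop:P-som} with $P_{0}=R''=a_{m''}a_{m''-1}\cdots a_1$ and $Q_{0}=R'$, which yields
\[
R'R''=\sum_{i=1}^{m''-1}Q_{i-1}P_{i}a_{i}+Q_{m''-1}a_{m''}.
\]
The crucial observation is that each $Q_{i-1}$ and each $P_{i}$ is again a right product: indeed $P_{i}=a_{m''}\cdots a_{i+1}$ is a right product by definition, while the recursion $Q_{i}=-Q_{i-1}a_{i}$ gives $Q_{i-1}=(-1)^{i-1}R'a_1\cdots a_{i-1}$, obtained from the right product $R'$ by successively appending single factors on the right, which preserves the right-product form. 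A length count then shows $\#(Q_{i-1})+\#(P_{i})=(m'+i-1)+(m''-i)=m-1$, so each inner product $Q_{i-1}P_{i}$ is a product of length $m-1$.

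Now the induction hypothesis applies to each $Q_{i-1}P_{i}$: it is a linear combination of right products of length $m-1$. Appending the single factor $a_{i}$ on the right to a right product of length $m-1$ produces a right product of length $m$, so every term $(Q_{i-1}P_{i})a_{i}$ is a linear combination of right products of length $m$; and the final term $Q_{m''-1}a_{m''}$, being a right product of length $m-1$ multiplied by a single factor, is itself a right product of length $m$. Summing over $i$ shows that $R'R''$, and hence $T$, is a linear combination of right products of length $m$, which closes the induction.

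I expect the main obstacle to be organizational rather than conceptual: one must keep the two nested recursions straight (the outer induction on $m$ together with the internal recursion of Proposition \ref{prop:P-som}) and verify that the reduction genuinely lowers the length from $m$ to $m-1$, so that no circularity arises. The two points deserving real care are that right-appending a single factor preserves the right-product structure and that the length bookkeeping $(m'+i-1)+(m''-i)=m-1$ holds for every admissible $i$.
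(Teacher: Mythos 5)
Your proof is correct and follows essentially the same route as the paper: induction on the length $m$, with Proposition \ref{prop:P-som} applied to the outermost factorization $T=Q_0P_0$ and the induction hypothesis applied to the resulting length-$(m-1)$ products $Q_{i-1}P_i$, the terms $Q_{i-1}$ being right products up to sign so that right-appending $a_i$ restores right products of length $m$. If anything, you are slightly more explicit than the paper, which tacitly assumes the right factor $P_0$ is already a right product, whereas you justify that reduction via bilinearity and the induction hypothesis applied to both factors.
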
    
 
\begin{proof} By induction on the length $m$, we have: 
If $m$ equals $1$ or $2$, there is nothing to do.
If $m=3$, one can notice that $T=abc$ or $a(bc)=abc-acb$ for all $a,b,c$ in $L$. The lemma is also obvious.\\
Let us suppose that the lemma is true for a product which length is strictly less than $m\geq4$.

Now for a given product (with length $m$) $T=Q_0P_0$ where  $P_{0}$ is a right  product of  length $n$ such that $m>n\geq1$.  
Thanks to Proposition \ref{prop:P-som}, 
\[
T=Q_{0}P_{0}=\sum_{i=1}^{n-1}Q_{i-1}P_{i}a_{i}+Q_{n-1}a_{n}.     
\]
The length of following products $Q_{i-1}P_{i}$ for $i\in\mathbb{I}(n-1)$  and $Q_{n-1}$ is $m-1$, so they are linear combinations of right products of length $m-1$.     
Then  $T$ is a linear combination of right products of length $m$.
\end{proof}

\section{Right products of weight $n$  in the ideal $B$}\label{poids}

\begin{lemm} \label{lienon-1}  Any product $T$ with length $m$
and weight $n$ with regard to the ideal $B$ of Leibniz algebra $L$ is a linear combination of right products of length $m$ and weight $n$.
\end{lemm}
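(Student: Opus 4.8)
The plan is to replay the induction on the length $m$ from Lemma \ref{lem:nilpo1}, this time carrying the weight along as a conserved quantity. The key point is the observation already recorded in Remark \ref{remq}: the rewriting of Proposition \ref{prop:P-som} neither creates nor destroys factors, since the set $\Lambda$ of all factors of the product is exactly the set of factors occurring in each term $\left(Q_{k-1}P_k\right)a_k$ and $Q_{m-1}a_m$ of the decomposition. Because the weight of a product is simply the number of its factors that lie in $B$, this invariance of $\Lambda$ forces the weight to be preserved, and this is made quantitative by the additive relations (\ref{rem1.2}) and (\ref{rem1.4}).

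First I would clear the base cases. For $m\leq 2$ there is nothing to prove, and for $m=3$ the only nontrivial product is $T=a(bc)=abc-acb$; both right products on the right-hand side involve the same three factors $a,b,c$ as $T$, hence they carry the same weight. I then assume the lemma for every product of length strictly less than $m\geq 4$, the right products produced always having the same weight as the product they replace.

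For the inductive step, I would write the length-$m$ product as $T=Q_0P_0$ with $P_0$ a right product of length $\ell$, $1\leq\ell<m$, and apply Proposition \ref{prop:P-som} to get
\[
T=\sum_{i=1}^{\ell-1}Q_{i-1}P_ia_i+Q_{\ell-1}a_\ell .
\]
Each factor $Q_{i-1}P_i$, and likewise $Q_{\ell-1}$, is a product of length $m-1$, so by the induction hypothesis it is a linear combination of right products of length $m-1$ and weight $\#_{B}\left(Q_{i-1}P_i\right)=\#_{B}\left(Q_{i-1}\right)+\#_{B}\left(P_i\right)$. Right-multiplying such a right product by the single factor $a_i$ keeps it a right product, raises its length to $m$, and adds $\#_{B}\left(a_i\right)$ to its weight; by (\ref{rem1.2}) the resulting weight equals $\#_{B}\left(T\right)=n$, and the term $Q_{\ell-1}a_\ell$ is handled identically using (\ref{rem1.4}). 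Collecting the terms then exhibits $T$ as a linear combination of right products of length $m$ and weight $n$.

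The step that demands the most care is the weight bookkeeping rather than any algebraic identity: one must apply the induction hypothesis to each $Q_{i-1}P_i$ \emph{at its own weight}, and check that appending the lone factor $a_i$ contributes precisely $\#_{B}\left(a_i\right)$, so that the relations of Remark \ref{remq} close up to the full weight $n$ of $T$. Since the mechanism is otherwise exactly that of Lemma \ref{lem:nilpo1}, once the invariance of the factor set $\Lambda$ is taken as the guiding principle no genuinely new difficulty appears.
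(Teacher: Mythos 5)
Your proof is correct and takes essentially the same route as the paper's: the same induction on the length $m$ using the decomposition of Proposition \ref{prop:P-som}, with the weight carried through each term via the relations of Remark \ref{remq}. If anything, your bookkeeping is more explicit than the paper's, which simply declares the weight preservation ``clear'' from Equations (\ref{rem1.1})--(\ref{rem1.4}), whereas you spell out applying the induction hypothesis to each $Q_{i-1}P_i$ at its own weight and then adding $\#_{B}\left(a_i\right)$.
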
    

\begin{proof} 
By induction on the length $m$, we have:\\ 
The lemma is obvious if $m\leq2$.      
If $m=3$, then there are $a_1,a_2,a_3$ elements
in  $L$ such that $P$ is one of the following linear combinations of right \mbox{products:} $a_3a_2a_1$ 
and $a_3\left(a_2a_1\right)
=a_3a_2a_1-a_3a_1a_2$. So for $m=3$, $P$ is a linear combination of right products.\par

Let us suppose that the lemma is true for a product which length is strictly less than $m\geq4$.

Now for a given product (with length $m$ and weight $n\leq m$) $T=Q_0P_0$ where  $P_{0}$ is a right  product of  length $m'$ such that $m>m'\geq1$.  
Thanks to Proposition \ref{prop:P-som}, 
\[
T=Q_{0}P_{0}=\sum_{i=1}^{m'-1}Q_{i-1}P_{i}a_{i}+Q_{m'-1}a_{m'}.     
\]
The length of following products $Q_{i-1}P_{i}$ for $i\in\mathbb{I}(m'-1)$ and $Q_{m'-1}$ is $m-1$, so they are linear combinations of right products of length $m-1$.     
Then  $T$ is a linear combination of right products of length $m$.

Thanks to the equations; Equation (\ref{rem1.1}) to  Equation (\ref{rem1.4}), it is clear that for $i\in\Ind(p-1)$, the weight of the standart product $Q_{i-1}P_{i}a_{i}$ and the weight of $Q_{p-1}a_{p}$ are equal to $n$. 
\end{proof}

\begin{lemm} \label{Bn} Let  $L$ be a Leibniz algebra and $B\neq\{0\}$ an ideal in  $L$.      Let $P_{0}=a_{m}a_{m-1}a_{m-2}\cdots a_3a_2a_1$
be a right product with length $m$ and weight $n\geq1$ in $B$. Then $P_{0}$ belongs to $B_n$.     
\end{lemm}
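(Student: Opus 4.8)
The plan is to argue by induction on the length $m$ of the right product $P_0$, peeling off the last factor $a_1$ and distinguishing whether or not $a_1$ lies in $B$. The base case $m=1$ is immediate: a right product of length $1$ and weight $n\geq 1$ must have $n=1$ and $P_0=a_1\in B=B_1$, so the claim holds.

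Before the inductive step I would isolate the one computational fact that drives the whole argument, namely the inclusion $B_{n-1}\cdot B\subseteq B_n$ for every $n\geq 1$. For $n=1$ this is $L\cdot B\subseteq B=B_1$, which holds because $B$ is a (left) ideal; for $n=2$ it reads $B\cdot B=B^2\subseteq B_2$; and for $n\geq 3$ one expands $B_{n-1}\cdot B=(B^{n-1}+\Es(B))\cdot B=B^{n-1}B+\Es(B)B$, uses the definitional identity $B^{n-1}B=B^n$ for the right powers, and uses that $\Es(B)=B\cap\Ess(L)$ is an ideal so that $\Es(B)\cdot B\subseteq\Es(B)$. Together these give $B_{n-1}\cdot B\subseteq B^n+\Es(B)=B_n$.

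For the inductive step I would write $P_0=P_1a_1$, where $P_1=a_m a_{m-1}\cdots a_2$ is a right product of length $m-1$. If $a_1\notin B$, then all of the $\geq n$ factors coming from $B$ already occur in $P_1$, so $P_1$ has weight $\geq n$; the induction hypothesis yields $P_1\in B_n$, and since $B_n$ is an ideal by Lemma \ref{drf}, we conclude $P_0=P_1a_1\in B_n$. If instead $a_1\in B$, then $P_1$ has weight $\geq n-1$, so by the induction hypothesis (with the convention $B_0=L$ taking care of the subcase $n=1$) we obtain $P_1\in B_{n-1}$; hence $P_0=P_1a_1\in B_{n-1}\cdot B\subseteq B_n$ by the inclusion isolated above.

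The only genuine content is the auxiliary inclusion $B_{n-1}\cdot B\subseteq B_n$; everything else is bookkeeping of weights through the single recursion $P_0=P_1a_1$. I expect the main (minor) obstacle to be invoking the definitional identity $B^{n-1}B=B^n$ and the ideal property $\Es(B)\cdot B\subseteq\Es(B)$ cleanly, together with handling the small values $n=1,2$ separately, since there $B_n$ is not given by the formula $B^n+\Es(B)$. Once $B_n$ is known to be an ideal, the case $a_1\notin B$ is essentially free, and the case $a_1\in B$ reduces entirely to that auxiliary inclusion.
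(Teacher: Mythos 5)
Your proof is correct and is essentially the paper's own argument in different packaging: the paper walks through the right product from left to right, showing that the partial product obtained after absorbing the $j$-th factor lying in $B$ belongs to $B_j$, and it uses exactly your two ingredients --- that each $B_j$ is an ideal (Lemma \ref{drf}) and that $B_j\cdot B\subseteq B^{j+1}+\Es(B)=B_{j+1}$. Your induction on the length $m$ with the case split on whether $a_1\in B$ is the same invariant traversed right to left (the paper's version indexes the walk by the positions $\sigma(1)<\cdots<\sigma(n)$ of the $B$-factors instead), so the two proofs coincide in substance.
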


\begin{proof} 
Let $\sigma$ be an  injective map of  $\mathbb{I}(n)$ to
dans $\mathbb{I}(m)$ such that $i<j$ implies that $\sigma(i)<\sigma(j)$ and for all  $j\in\mathbb{I}(n),a_{\sigma(j)}\in B$.      
Let us also define for all integer $k\in\mathbb{I}(n-1)$,
 the following products: 
$$\begin{aligned}
Q'_{0}=&a_{m}a_{m-1}\cdots a_{\sigma(n)+1},\\ 
Q_k=&Q'_{k-1}a_{\sigma(n-k+1)}, \\                   
Q'_k=&Q_ka_{\sigma(n-k+1)-1}\cdots a_{\sigma(n-k)+1},\\ 
Q_n=&Q'_{n-1}a_{\sigma(1)}\cdots a_1.                       
\end{aligned}$$
 
Clearly, $Q_1$ belongs to $B\subseteq B^{1}+\Es(B)=B_1$  and also   $Q'_1$ belongs to $B$.\\
Since $B$ is an ideal $Q_2=Q'_1a_{\sigma(n-j+1)}$ belongs to $B\cdot B\subseteq B^{2}+\Es(B)=B_2$.     \\
By induction, let us suppose that for integer $j\in\Ind(n-1)$, we have $Q_{j}$ belongs to $B_{j}= B^{j}+\Es(B)$. Then let us show that 
 $Q_{j+1}$ is an element of $B_{j+1}=B^{j+1}+\Es(B)$. Indeed, we have,\\
 $Q'_{j}=Q_{j}a_{\sigma(n-j+2)-1}\cdots a_{\sigma(n-j+1)+1}$ is an element of the ideal $B_{j}$ and so on,\\
 $Q_{j+1}=Q'_{j}a_{\sigma(n-j+1)}$ belongs to 
$B_{j}\cdot B\subseteq B^{j+1}+\Es(B)=B_{j+1}$.\\
Then we have proved that $P_{0}=Q_{n}$ is an element of $B_{n}$.     
\end{proof}

\begin{lemm} \label{lem:laqest} Let $k,\ell$ be integers such that $1\leq k\leq\ell$  and let  $L$ be a Leibniz algebra 
and $B$ an ideal of $L$ which is $\Es_{k}$-right nil.  A right product  $P=a_{m}a_{m-1}a_{m-2}\cdots a_3a_2a_1$,
of length $m$ and weight (in $B$) $n$ greater or equal to $2\ell$, belongs to $B_{(L,k)}^{\ell}$.      
\end{lemm}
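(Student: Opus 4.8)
The plan is to peel off the last $k$ factors of $P$ and let the $\Es_k$-right nil hypothesis annihilate the undesirable $\Es(B)$-part they create. Since $k\leq\ell$ and $n\geq 2\ell$, we have $m\geq n\geq 2\ell\geq \ell+k>k$, so the front segment $P_k=a_ma_{m-1}\cdots a_{k+1}$ (in the notation of Proposition \ref{prop:P-som}) is a nonempty right product, and the right-associativity of $P$ lets us regroup
$$
P=P_0=P_k\,a_ka_{k-1}\cdots a_1=\big(\cdots((P_ka_k)a_{k-1})\cdots\big)a_1 .
$$
This already exhibits $P$ as a single leading factor $P_k$ followed by exactly $k$ factors of $L$, which is the shape of a generator of $B_{(L,k)}^{\ell}$ once we know the leading factor sits in $B^{\ell}$.

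First I would control the weight of the front. The $k$ removed factors $a_k,\dots,a_1$ carry off at most $k$ of the factors belonging to $B$, so $\#_{B}(P_k)\geq n-k\geq 2\ell-\ell=\ell$, where the hypothesis $k\leq\ell$ is used. Thus $P_k$ is a right product of weight at least $\ell$, so Lemma \ref{Bn} gives $P_k\in B_{\#_{B}(P_k)}$, and the descending filtration $B_w\supseteq B_{w+1}$ from Lemma \ref{drf} yields $P_k\in B_{\ell}=B^{\ell}+\Es(B)$ (for $\ell=1$ this reads $B_1=B=B^{1}+\Es(B)$ since $\Es(B)\subseteq B$). Write $P_k=x+e$ with $x\in B^{\ell}$ and $e\in\Es(B)$.

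Substituting and expanding the regrouped product by bilinearity, $P=x\,a_k\cdots a_1+e\,a_k\cdots a_1$. The first summand is a right product whose leading factor $x$ lies in $B^{\ell}$ followed by $k$ factors of $L$, hence belongs to $B_{(L,k)}^{\ell}$ by definition. The second summand lies in $\Es(B)_{(L,k)}$, which equals $\{0\}$ precisely because $B$ is $\Es_k$-right nil. Therefore $P\in B_{(L,k)}^{\ell}$, as claimed.

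The substance of the argument is concentrated in two bookkeeping checks rather than any hard estimate. One is confirming that $P_k\in B^{\ell}+\Es(B)$, which chains Lemma \ref{Bn} (weight governs membership in $B_w$) with the monotonicity of Lemma \ref{drf}; this is where $k\leq\ell$ is indispensable, as it guarantees $\#_{B}(P_k)\geq\ell$. The other is recognizing that the threshold $2\ell$ is exactly what the worst case $k=\ell$ forces: removing $\ell$ trailing factors may strip away $\ell$ units of weight and must still leave weight $\ell$ in front, so $n\geq 2\ell$ is both necessary and sufficient for the split to succeed uniformly over all admissible $k$.
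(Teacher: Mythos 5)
Your proof is correct and follows essentially the same route as the paper's: both peel off the trailing $k$ factors, bound the weight of the front segment by $n-k\geq 2\ell-k\geq\ell$ using $k\leq\ell$, place it in $B_\ell=B^\ell+\Es(B)$ via Lemma \ref{Bn} and the filtration of Lemma \ref{drf}, and then kill the $\Es(B)$ contribution with the $\Es_k$-right nil hypothesis. Your explicit splitting $P_k=x+e$ and the $\ell=1$ edge case are just spelled-out versions of the paper's one-line computation $\left(B_{\ell}\right)_{(L,k)}=\left(B^{\ell}+\Es(B)\right)_{(L,k)}=B^{\ell}_{(L,k)}$.
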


\begin{proof}  The right product $Q=a_{m}a_{m-1}a_{m-2}\cdots a_{k+1}$ is of weight greater or equal to $\ell$, indeed let $n'$ be the weight of  $Q$ and $n''$ be the weight of $a_{k}\cdots a_3a_2a_1$.   We have 
$0\leq n''\leq k$ and the equality $P=Qa_{k}\cdots a_3a_2a_1$ implies that
 $n'\leq n\leq k+n'$. So $n'\geq n-k\geq2\ell-k\geq\ell$.     
The Lemma \ref{Bn} tells that $Q\in B_{\ell}$.      And so on, $P=Qa_{k}\cdots a_3a_2a_1\in\left(B_{\ell}\right)_{(L,k)}=\left(B^{\ell}+\Es(B)\right)_{(L,k)}=B^l_{(L,k)}$
since $B$ est un ideal $\Es_k$-right nil.
\end{proof}

\begin{lemm} \label{lem:2n+1} Let $k$ be an integer such that the ideal  $B$ is $\Es_{k}$-right nil.  Let $P$ be product of weight  $t\geq4k^2-2k+1$ with regard to the ideal $B$. then $P$ is a linear combination of right products  $Q_j$ ($P=\ds\sum_{j \textnormal{ fini}}\mu_{j}Q_{j}$)  
such that, for any $j$, we have $Q_{j}$ belongs to $\left(B^{k}\right)_{(L,k)}$ or has at least one factor in $\left(B^{k}\right)_{(L,k)}$.     
\end{lemm}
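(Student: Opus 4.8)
The plan is to strip the arbitrary association off $P$ and then feed the resulting right products into Lemma~\ref{lem:laqest}. First I would invoke Lemma~\ref{lienon-1}: writing $P$ as a product of length $m$ and weight $t$, that lemma expresses $P=\sum_{j}\mu_{j}Q_{j}$ as a finite linear combination of right products $Q_{j}$, each of length $m$ and weight $t$. The essential point of this reduction is that re-associating a product never alters which of its factors lie in $B$, so the weight is genuinely conserved and every $Q_{j}$ carries the full weight $t\geq 4k^{2}-2k+1$.

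Next I would observe the elementary identity $4k^{2}-2k+1=2k+(2k-1)^{2}$, whence $t\geq 2k$. Thus each right product $Q_{j}$ has weight at least $2\ell$ for the choice $\ell=k$, the constraint $1\leq k\leq\ell$ is satisfied, and $B$ is $\Es_{k}$-right nil by hypothesis. Lemma~\ref{lem:laqest} then applies verbatim and yields $Q_{j}\in\left(B^{k}\right)_{(L,k)}$ for every $j$. Hence each term of the decomposition already satisfies the first alternative of the conclusion, and $P=\sum_{j}\mu_{j}Q_{j}$ is the sought expression.

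The only thing to be careful about is the verification of the hypotheses of Lemma~\ref{lem:laqest}, namely that the weight of each $Q_{j}$ is really $\geq 2k$ (secured by $t\geq 4k^{2}-2k+1$ together with the weight-preserving reduction) and that $\ell=k$ meets $1\leq k\leq\ell$; there is no genuine obstacle, since the bound $4k^{2}-2k+1$ is comfortably larger than the $2k$ strictly needed for this step. I expect the larger exponent is carried along only to match the constant $4n^{2}-2n+1$ of Section~\ref{mainresult}: there one takes $k=n$ with $B^{n}=\{0\}$, so that $\left(B^{k}\right)_{(L,k)}=\{0\}$ and both alternatives of the present conclusion collapse to $0$, forcing every product of weight $\geq 4n^{2}-2n+1$ to vanish. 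Should one instead wish to realise the second alternative explicitly, i.e.\ to display inside a standard product an honest factor belonging to $\left(B^{k}\right)_{(L,k)}$, one would extract from $Q_{j}$ a sub-block of weight $2k$ by a pigeonhole count on the distribution of its $B$-factors, and it is precisely this extraction that would make the quadratic bound intervene.
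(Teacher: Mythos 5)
Your argument is correct if the paper's lemmas are taken exactly as stated, but it follows a genuinely different route from the paper's own proof. You apply Lemma~\ref{lienon-1} to get $P=\sum_j\mu_jQ_j$ with each right product $Q_j$ of length $m$ and weight $t$, and then, since $t\geq 4k^2-2k+1\geq 2k$, you apply Lemma~\ref{lem:laqest} with $\ell=k$ to each $Q_j$ directly; every $Q_j$ then lies in $\left(B^{k}\right)_{(L,k)}$, so the first alternative of the conclusion always holds, the disjunction is never needed, and only the bound $t\geq 2k$ is used. The paper instead writes each $Q_j=s_{j,p}\cdots s_{j,1}$ and allows the factors $s_{j,i}$ to be composite sub-products of unknown individual weights, then splits into two cases: if some $s_{j,i_0}$ has weight $\geq 2k$, Lemma~\ref{lem:laqest} places that \emph{factor} in $\left(B^{k}\right)_{(L,k)}$ (this is the source of the second alternative); if every factor has weight $\leq 2k-1$, the count $q(2k-1)\geq t=4k^2-2k+1$ forces $q>2k$ factors of weight $\geq 1$, each of which lies in $B$ because $B$ is an ideal, so $Q_j$ itself has factor-level weight $\geq 2k$ and Lemma~\ref{lem:laqest} applies to it. The quadratic bound is used only in this pigeonhole step, exactly as your closing remark anticipates. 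The trade-off: your route is shorter and shows that $t\geq 2k$ already suffices, which would improve the constant $4n^2-2n+1$ of Theorem~\ref{theo} to a linear one; the paper's route is the one that survives the weaker reading of Lemma~\ref{lienon-1} in which the reduction only yields right products whose factors may themselves be products --- under that reading the factor-level weight of $Q_j$ need not be $t$, your direct appeal to Lemma~\ref{lem:laqest} would break down, and the case analysis with the quadratic bound becomes necessary. Since Lemma~\ref{lienon-1} as stated (and as proved via Proposition~\ref{prop:P-som}, whose decomposition preserves the multiset of atomic factors and hence the weight) does give each $Q_j$ the full weight $t$, your proof stands.
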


\begin{proof}  Let $k>1$ and $t\geq4k^2-2k+1$.      Thanks to Lemma \ref{lienon-1}, any product $P$ of weight greater or equal to $t$ is a linear combination of right products of weight greater or equal to $t$.      Let $P=\ds\sum_{j}\mu_{j}Q_{j}$ where $Q_{j}$ is a right product of weight greater or equal to $t$.\par      
For any $j$ we have $Q_{j}=s_{j,p}s_{j,p-1}\cdots s_{j,1}$ where $s_{j,i}\in L$ ($p$ is the length of $Q_{j}$).      
\begin{description}
\item[-] if, there is one element $s_{j,i_{0}}$ such that it's weight is greater or equal to $2k$, then $s_{j,i_{0}}$ belongs to $\left(B^{k}\right)_{(L,k)}$ by application of Lemma \ref{lem:laqest}.     And so on  $Q_{j}$ has a factor in  $\left(B^{k}\right)_{(L,k)}$. 
\item[-] else, every factor $s_{j,i}$ has a weight strictly less than $2k$.      Let $q$ be the number of factors $s_{j,i}$ with a weight greater or equals to
 $1$.     
Then one has $q\left(2k-1\right)\geq t=4k^2-2k+1$ and then $q>2k$.\\
When the weight of  $s_{j,i}$ is greater or equal to $1$, we have $s_{j;i}$ belongs to  $B$.     
So $Q_{j}$ is of weight greater or equal to  $q$. 
Since $q\geq2k$, we have $Q_{j}\in\left(B^{k}\right)_{(L,k)}$
thanks to Lemma \ref{lem:laqest}. 
\end{description} 
     For language simplification, we will say that $Q_j$ has at least one factor in $\left(B^{k}\right)_{(L,k)}$.
\end{proof}

\section{Main Theorem}\label{mainresult}

\begin{theo}\label{theo} Let $k'$ be a nonnegative integer,  let $L$ be a Leibniz algebra and let 
 $B$ be an ideal of  $L$ which is $\Es_{k'}$-right nil.  Then the following assertions are equivalents: 
\begin{description}
\item [{(i)}] $B$ is \textit{right nilpotent} ; 
\item [{(ii)}] $B$ is \textit{nilpotent} ;
 
\item [{(iii)}] $B$ is \textit{strongly nilpotent}.      ;
\end{description}
\end{theo}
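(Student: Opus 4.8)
The plan is to prove the cycle of implications (iii)$\Rightarrow$(ii)$\Rightarrow$(i)$\Rightarrow$(iii), where the first two are immediate from the definitions and the last carries all the content (and is the only place the hypothesis that $B$ is $\Es_{k'}$-right nil is used).

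For (iii)$\Rightarrow$(ii) I would observe that a generator of $B^{\{n\}}$ is a product of length $n$ all of whose factors lie in $B$, hence a product of weight at least $n$ with respect to $B$; thus $B^{\{n\}}\subseteq B^{\langle n\rangle}$, and $B^{\langle n\rangle}=\{0\}$ forces $B^{\{n\}}=\{0\}$. For (ii)$\Rightarrow$(i), recall (as noted after the definition) that $B^n$ is generated by right products of length $n$ and weight $n$ in $B$, which are in particular products of length $n$ in $B$; so $B^n\subseteq B^{\{n\}}$ and $B^{\{n\}}=\{0\}$ gives $B^n=\{0\}$.

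The substantive step is (i)$\Rightarrow$(iii). Suppose $B$ is right nilpotent of degree $n$, i.e. $B^n=\{0\}$; then $B^k=\{0\}$ for every $k\geq n$, since $B^{k+1}=B^k\cdot B$. I would first record that being $\Es_{k'}$-right nil propagates upward: if $\Es(B)_{(L,k')}=\{0\}$ then $\Es(B)_{(L,k)}=\{0\}$ for all $k\geq k'$, because a right product $d\,a_k\cdots a_1$ with $d\in\Es(B)$ can be read as $\bigl(d\,a_k\cdots a_{k-k'+1}\bigr)a_{k-k'}\cdots a_1$, whose left factor $d\,a_k\cdots a_{k-k'+1}$ already lies in $\Es(B)_{(L,k')}=\{0\}$. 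Hence, setting $k=\max(n,k')$, the ideal $B$ is simultaneously $\Es_k$-right nil and satisfies $B^k=\{0\}$. Now I would apply Lemma \ref{lem:2n+1} with this $k$: every product $P$ of weight $t\geq 4k^2-2k+1$ with respect to $B$ is a linear combination $P=\sum_j\mu_jQ_j$ of right products $Q_j$, each of which either lies in $\bigl(B^k\bigr)_{(L,k)}$ or has at least one factor in $\bigl(B^k\bigr)_{(L,k)}$. But $\bigl(B^k\bigr)_{(L,k)}$ is generated by right products $b\,a_k\cdots a_1$ with $b\in B^k=\{0\}$, so $\bigl(B^k\bigr)_{(L,k)}=\{0\}$. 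In either alternative $Q_j=0$ — directly in the first case, and by multilinearity through a vanishing factor in the second — whence $P=0$. Therefore $B^{\langle t\rangle}=\{0\}$ for $t=4k^2-2k+1$, that is, $B$ is strongly nilpotent (and the bound $4n^2-2n+1$ of the abstract is recovered when $k'\leq n$).

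I expect the only genuine subtlety to be the reconciling choice $k=\max(n,k')$: one must check both that the $\Es_{k'}$-right nil property is inherited by every larger index and that right nilpotency of degree $n$ annihilates $\bigl(B^k\bigr)_{(L,k)}$ for $k\geq n$. Everything else is bookkeeping, since Lemma \ref{lem:2n+1} already encapsulates the combinatorial heart of the argument.
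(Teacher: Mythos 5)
Your proof is correct and follows essentially the same route as the paper: the easy implications come from the inclusions $B^{k}\subseteq B^{\{k\}}\subseteq B^{\left\langle k\right\rangle }$, and (i)$\Rightarrow$(iii) is obtained by setting $k=\max(n,k')$ and invoking Lemma \ref{lem:2n+1} to kill every product of weight at least $4k^2-2k+1$. Your explicit verification that the $\Es_{k'}$-right nil property propagates to every index $k\geq k'$ is a detail the paper uses tacitly (it applies Lemma \ref{lem:2n+1} at $k=\max\{k',\ell'\}$ without comment), so spelling it out is a minor improvement rather than a different approach.
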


\begin{proof} Indeed, for any integer $k\geq1$, the vectors spaces's inclusions   $B^{k}\subseteq B^{\{k\}}\subseteq B^{\left\langle k\right\rangle }$ tell us that $(iii)\Rightarrow(ii)\Rightarrow(i)$.\\
 Furthermore, suppose that there  is an integer  $\ell'\geq1$ such that 
$B^{\ell'}=\left\{ 0\right\} $. Let us define $k=\max\left\{ k',\ell'\right\} $, then for an integer $ell$ such that  $\ell\geq4k^2-2k+1$, the Lemma \ref{lem:2n+1} tells us that any product $P$ with weight  greater or equal to $\ell\geq4k^2-2k+1$,  in $B$ is a linear combination of right products which have at least one factor in $\left(B^{k}\right)_{(L,k)}\subseteq\left(B^{k'}\right)_{(L,k)}=\left\{ 0\right\} $.     
And so on $P=0$. Then $B^{\left\langle \ell\right\rangle }=0$.     
The implication $(i)\Rightarrow(iii)$ is done. 
\end{proof}

\begin{coro}\label{equiv} Let $L$ 
be a  Leibniz algebra. 
The following assertions are equivalents:  
\begin{description}
\item [{(i)}]  $L$ is \textit{right nilpotent} ; 
\item [{(ii)}]  $L$ is \textit{nilpotent} ;
\item [{(iii)}]  $L$ is \textit{strongly nilpotent}.      
\end{description} 
\end{coro}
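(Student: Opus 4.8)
The plan is to obtain this Corollary as the special case $B=L$ of Theorem \ref{theo}. The only subtlety is that the theorem carries the standing hypothesis that the ideal be $\Es_{k'}$-right nil, so the heart of the argument is to show that, for $B=L$, this hypothesis is automatically fulfilled as soon as $L$ is right nilpotent. Once that is in place, the three equivalences are a direct transcription of the theorem applied to the ideal $L$ of itself.

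First I would dispose of the two easy implications, exactly as in the proof of Theorem \ref{theo}. For every integer $k\geq1$ and every algebra one has the subspace inclusions $L^{k}\subseteq L^{\left\{ k\right\} }\subseteq L^{\left\langle k\right\rangle }$, which yield $(iii)\Rightarrow(ii)\Rightarrow(i)$ with no extra assumption; these need nothing beyond the definitions of right nilpotency, nilpotency and strong nilpotency specialized to $B=L$.

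The substantive step is $(i)\Rightarrow(iii)$. Here I would first dismiss the trivial case $L=\{0\}$, where all three statements hold vacuously. Assuming $L\neq\{0\}$ and $L^{n}=\{0\}$ for some $n\geq1$, I would check that $L$, viewed as an ideal of itself, is $\Es_{n-1}$-right nil. Indeed $\Es(L)=L\cap\Ess(L)\subseteq L$, so every generator $d\,a_{n-1}\cdots a_{1}$ of $\Es(L)_{(L,n-1)}$, with $d\in\Es(L)$ and $a_{i}\in L$, is a right product of length $n$ all of whose factors lie in $L$; hence it is one of the right products of length $n$ and weight $n$ generating $L^{n}$, and therefore belongs to $L^{n}=\{0\}$. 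Thus $\Es(L)_{(L,n-1)}=\{0\}$, i.e. $L$ is $\Es_{n-1}$-right nil.

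With this hypothesis verified I would simply invoke Theorem \ref{theo} for the ideal $B=L$ and the integer $k'=n-1$: since $L$ is $\Es_{k'}$-right nil and right nilpotent, the theorem delivers that $L$ is strongly nilpotent, which completes $(i)\Rightarrow(iii)$ and hence the full equivalence. The one point that genuinely requires care is the verification that right nilpotency forces the $\Es_{k'}$-right nil condition, but this is immediate from $\Es(L)\subseteq L$ together with the fact that $L^{n}$ is generated by right products of length $n$; so I do not expect any real obstacle once Theorem \ref{theo} is available.
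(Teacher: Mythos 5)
Your proposal is correct and follows essentially the same route as the paper: both dispose of $(iii)\Rightarrow(ii)\Rightarrow(i)$ via the inclusions $L^{k}\subseteq L^{\left\{ k\right\} }\subseteq L^{\left\langle k\right\rangle }$, and both prove $(i)\Rightarrow(iii)$ by checking that right nilpotency automatically makes $L$ an $\Es_{k}$-right nil ideal of itself and then invoking Theorem \ref{theo}. The only (harmless) difference is the verification detail: the paper uses $\Es(L)=\Ess(L)\subseteq L^{2}$ to get $\Ess(L)L\cdots L\subseteq L^{\ell'}=\{0\}$, whereas you use $\Es(L)\subseteq L$ so that every generator of $\Es(L)_{(L,n-1)}$ is a right product of length $n$ and hence lies in $L^{n}=\{0\}$.
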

\begin{proof} Clearly we have $(iii)\Rightarrow(ii)\Rightarrow(i)$.\\
Notice that the ideal $\Ess(A)$ is a subset of  $A^2$.
Assume that we have  $(i)$ and then let us show that  $(iii)$ is verified.\\
We know that there is an integer $\ell'>1$ which satisfies that $A^{\ell'}=\{0\}$. So we have  \\
$\Ess(A)\underset{\ell'-2\textnormal{ times}}{\underbrace{AAA\dots A}}\subseteq
(A^2)\underset{\ell'-2\textnormal{ times}}{\underbrace{AAA\dots A}}\subseteq A^{\ell'}=\{0\}$. So $A$ is $\Es_{\ell'}$-right nil. With part of the proof of the theorem \ref{theo}, we can conclude that $(i)\Rightarrow (iii)$.
\end{proof} 

\begin{remq}\end{remq}
\begin{itemize}
 \item For (right) Leibniz algebra $L$, the ideal $\Ess(L)$ is always $\Es_1$-left nil. But if $\Ess(L)$ is not $\Es_k$-right nil for some integer $k$,  $L$ is not nilpotent.
 \item For (left) Leibniz algebra $L$, the ideal $\Ess(L)$ is always $\Es_1$-right nil. But if $\Ess(L)$ is not $\Es_k$-left nil for some integer $k$,  $L$ is not nilpotent (cf. \cite{bereaslaokonk}).
 \item By duality, we have also proved  that if a (left) Leibniz algebra  $L$ is \textit{left nilpotent} of degree $n$, then $L$ is \textit{strongly nilpotent} of degree less or equal to $4n^2-2n+1$.
\end{itemize}

\end{document}